\newtheorem{theorem}{Theorem}[section]
\newtheorem{lemma}[theorem]{Lemma}
\theoremstyle{definition}
\theoremstyle{remark}
\numberwithin{equation}{section}
\begin{document}

\title{On Bloch's Theorem and the Contraction Mapping Principle}

\author{Jean C. Cortissoz \and Julio A. Montero}
\address{Department of Mathematics, Universidad de los Andes, Bogot\'a DC, COLOMBIA.
}






\begin{abstract}
In this paper we give a new proof, relying on Banach's contraction mapping principle,
of a celebrated theorem of Andr\'e Bloch. Also, via the same contraction mapping principle,
we give a proof of a Bloch type theorem for normalised Wu $K$-mappings.
\end{abstract}

\maketitle
\section{Introduction}

In 1924 Andr\'e Bloch proved his celebrated theorem, whose statement we give right below.
\begin{theorem}
Let $f\left(z\right)$ be an analytic function on $B_1\left(0\right)$ (the unit disk centered at the origin)
satisfying $f\left(0\right)=0$ and $f'\left(0\right)=1$. Then there is a constant $B$ (called 
Bloch's constant) independent of $f$, such that there is a subdomain
$\Omega\subset B_1\left(0\right)$ where $f$ is one to one and whose
image contains a disk of radius $B$ (which we shall call, as is customary, a {\bf schlicht disk}, 
and we will say that $f$ covers a \bf{schlicht disk of radius} B).
\end{theorem}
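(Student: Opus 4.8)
The plan is to reduce the existence of a schlicht disk to the solvability of the equations $f(z)=w$ for \emph{all} $w$ in a fixed disk about some value $f(z_0)$, and to solve these equations by Banach's contraction mapping principle applied to a Newton-type operator with frozen slope. The two ingredients are a normalisation step producing a good centre $z_0$, and a derivative-oscillation estimate that makes the relevant operator a contraction; both the existence of a covered disk and the univalence of $f$ will then fall out of the two halves of Banach's principle.

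First I would carry out the classical Landau normalisation. The function $z\mapsto(1-\abs{z})\abs{f'(z)}$ is continuous on $\overline{B_1\left(0\right)}$ and vanishes on the boundary, so it attains a maximum $M$ at some interior point $z_0$; since $f'(0)=1$ we have $M\ge 1$, whence $\abs{f'(z_0)}>0$. Setting $\rho_0=\tfrac12\left(1-\abs{z_0}\right)$, the inequality $(1-\abs{z})\abs{f'(z)}\le M$ gives, for every $z\in B_{\rho_0}(z_0)$, the control $\abs{f'(z)}\le 2\abs{f'(z_0)}$. Thus on this disk $\abs{f'}$ cannot exceed twice its value at the centre, and the product $\abs{f'(z_0)}\rho_0=\tfrac12 M\ge\tfrac12$ is the quantity that ultimately yields an absolute radius.

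Next, to solve $f(z)=w$ I would introduce the operator
\begin{equation}
T_w(z)=z-\frac{f(z)-w}{f'(z_0)},
\end{equation}
whose fixed points are exactly the solutions of $f(z)=w$. Its derivative is $T_w'(z)=\bigl(f'(z_0)-f'(z)\bigr)/f'(z_0)$, so making $T_w$ a contraction amounts to bounding the oscillation $\abs{f'(z)-f'(z_0)}$ by a fixed fraction of $\abs{f'(z_0)}$. Here I would apply the Schwarz lemma to the function $w\mapsto f'(z_0+\rho_0 w)-f'(z_0)$, which vanishes at the origin and is bounded by $3\abs{f'(z_0)}$ on $B_1\left(0\right)$ by the previous step, to obtain $\abs{f'(z)-f'(z_0)}\le 3\abs{f'(z_0)}\,\abs{z-z_0}/\rho_0$. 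Restricting to $\overline{B_r(z_0)}$ with $r=\rho_0/6$ then forces $\abs{T_w'}\le\tfrac12$ throughout.

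Finally I would verify the self-mapping condition. For $z\in\overline{B_r(z_0)}$ one has $\abs{T_w(z)-z_0}\le\tfrac12 r+\abs{w-f(z_0)}/\abs{f'(z_0)}$, so $T_w$ sends $\overline{B_r(z_0)}$ into itself as soon as $\abs{w-f(z_0)}\le\tfrac12\abs{f'(z_0)}\,r$. For every such $w$ Banach's principle supplies a unique fixed point, that is, a unique $z\in\overline{B_r(z_0)}$ with $f(z)=w$. Existence for all $w$ in the disk of radius $\tfrac12\abs{f'(z_0)}r=\tfrac{1}{12}\abs{f'(z_0)}\rho_0\ge\tfrac{1}{24}$ shows that $f$ covers a schlicht disk, while the \emph{uniqueness} half of the principle gives univalence of $f$ on the subdomain $\Omega$ of $B_r(z_0)$ mapping into that disk, yielding an admissible value $B=\tfrac{1}{24}$. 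I expect the main obstacle to be the derivative-oscillation estimate: one must choose $z_0$ cleverly enough, via the maximum of $(1-\abs{z})\abs{f'(z)}$, that $\abs{f'}$ cannot grow too fast near $z_0$, and then balance the domain radius $r$ against the target radius so that the contraction property and a nontrivial covering radius hold simultaneously.
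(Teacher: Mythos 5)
Your argument is correct, and although it rests on exactly the same fixed-point operator as the paper --- your $T_w(z)=z-\bigl(f(z)-w\bigr)/f'(z_0)$ is, after expanding $f$ in its Taylor series about the centre, precisely the map $g_w$ of equation (\ref{fixedpoint1}) --- the way you verify the contraction hypothesis is genuinely different. The paper picks its centres $\beta_n$ as maximum points of $\left|f'\right|$ on a carefully constructed increasing sequence of circles with controlled growth ratio $M(\beta_{n+1})/M(\beta_n)=\gamma$, and then bounds $\left|g_w'\right|$ by feeding Cauchy's estimates for the coefficients $f^{(k)}(\beta_n)$ into a geometric series; you instead take the single Landau centre maximising $(1-\left|z\right|)\left|f'(z)\right|$, which yields $\left|f'\right|\le 2\left|f'(z_0)\right|$ on $B_{\rho_0}(z_0)$ at no cost, and control the oscillation $\left|f'(z)-f'(z_0)\right|$ by one application of the Schwarz lemma. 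Your route is shorter and dispenses with all of the $\gamma,\epsilon_j$ bookkeeping, at the price of a slightly weaker constant: you obtain $B\ge 1/24$, which sits between the paper's first bound $0.0355\ldots>1/29$ and its refined bound $0.0813\ldots>1/13$. Two small points deserve attention. First, $(1-\left|z\right|)\left|f'(z)\right|$ need not extend continuously to $\overline{B_1(0)}$ when $f$ is only analytic on the open disk (and even boundedness of $f$ does not control $f'$ at the boundary); you should first pass to $f_s(z)=f(sz)/s$ with $s<1$, which is the analogue of the paper's ``assume $f$ bounded'' reduction, and let $s\to 1$ at the end. Second, to see that the injective set is a \emph{domain} whose image contains the whole covered disk $D$, note that the fixed point $z^*(w)$ depends continuously (indeed holomorphically) on $w$, so $z^*(D)$ is a connected open neighbourhood of $z_0$ on which $f$ is injective with image $D$; the paper glosses over this point as well, so you are at the same level of rigour there.
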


Bloch used this theorem to give a prove of Picard's Theorem. Since then computing the value of $B$
has been one of the most important problems in geometric
complex analysis. A simple proof 
is given in \cite{Conway} which gives $B\geq \dfrac{1}{72}$. The best estimate from below for Bloch's constant so far belongs to
Xiong \cite{Xiong} ($\dfrac{\sqrt{3}}{4}+3\times 10^{-4}\sim 0.4333\dots$), whereas an estimate from above
was given by Ahlfors and Grunsky in \cite{Ahlfors}; it is conjectured that the value of
Bloch constant is the one given by Ahlfors and Grunsky upper bound ($\sim 0.4719$).

In this paper we present a simple proof of Bloch's theorem based
on the contraction mapping principle (Banach's fixed point theorem). Although we do not
 improve on the known estimates given for Bloch's constant in the
 one dimensional setting, the
 proof seem simple enough to be interesting, and it applies with very few modifications to
 the problem of proving Bloch's theorem analogues in several complex variables. To justify 
 this last claim 
 we will prove a version of Bloch's theorem for the family of (normalised) Wu $K$-mappings,
 with an estimate for Bloch's constant
 that is asymptotically better in $K$ than the results known before Chen and
 Gauthier's paper of 2001 (see \cite{Chen2} and
 the references therein).

This paper is organised as follows. In Section \ref{prooffixedpoint} we give a proof
of Bloch's theorem using the contraction mapping principle. In Section
\ref{UsingEarleHamilton}, using Landau's simplification and the 
Earle-Hamilton fixed point theorem to further refine our estimates
(which, incidentally, gives another proof of Bloch's theorem which, though less elementary,
also gives a very decent estimate from below for Bloch's constant);
in Section \ref{Pushingthemethod}, we suggest using the methods of Section \ref{UsingEarleHamilton},
via an example, an algorithm to estimate Bloch's
constant for families of polynomials, and which may
lead to finding better estimates for Bloch's constant than the estimates currently known. Finally, in Section \ref{Severalvariables}
we apply the ideas developed in Section \ref{prooffixedpoint} to higher dimensions, namely
to the case of Wu $K$-mappings.

\section{A proof of Bloch's Theorem using Banach's Fixed Point Theorem}
\label{prooffixedpoint}

As we said in the introduction, our proof is based on Banach's contraction mapping principle;
the other ingredients in the proof are Cauchy's estimates and the
Maximum Principle. The following well known lemma gives a condition which ensures that a holomorphic map
is a contraction; we give a proof for the sake of completeness (we must say that 
the first version of this lemma that we wrote was incorrect: we thank professor Dror Varolin for
pointing this out and for correcting our wrong statement).

\begin{lemma}
\label{contractionlemma}
Let $f:\Omega\longrightarrow \mathbb{C}$ be a holomorphic function, $\Omega$ a convex domain.
If $\left|f'\left(z\right)\right|\leq 1-\sigma$, $\sigma>0$, for
all $z\in \Omega$, then $f$ is a contraction, i.e.,
\[
\left|f\left(z_2\right)-f\left(z_1\right)\right|\leq \left(1-\sigma\right)\left|z_2-z_1\right|.
\]
\end{lemma}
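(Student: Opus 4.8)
The plan is to reduce this global Lipschitz-type bound to the pointwise bound on the derivative by integrating $f'$ along the straight-line segment joining $z_1$ and $z_2$. The crucial role of the convexity hypothesis is precisely to guarantee that this segment lies entirely inside $\Omega$, so that the integral makes sense and $f'$ is controlled by $1-\sigma$ at every point of integration.

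First I would parametrise the segment by $\gamma\left(t\right) = z_1 + t\left(z_2 - z_1\right)$, $t \in \left[0,1\right]$, noting that convexity gives $\gamma\left(t\right) \in \Omega$ for each such $t$. Then the composition $g\left(t\right) = f\left(\gamma\left(t\right)\right)$ is a $C^1$ function of the real variable $t$ with values in $\mathbb{C}$, and the chain rule yields $g'\left(t\right) = f'\left(\gamma\left(t\right)\right)\left(z_2 - z_1\right)$. Next I would apply the fundamental theorem of calculus (to the real and imaginary parts of $g$ separately, if one wants to be pedantic) to write
\[
f\left(z_2\right) - f\left(z_1\right) = g\left(1\right) - g\left(0\right) = \int_0^1 f'\left(\gamma\left(t\right)\right)\left(z_2 - z_1\right)\, dt.
\]

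Taking absolute values and using the standard estimate $\left|\int_0^1 h\left(t\right)\,dt\right| \le \int_0^1 \left|h\left(t\right)\right|\, dt$ together with the hypothesis $\left|f'\left(\gamma\left(t\right)\right)\right| \le 1-\sigma$ gives
\[
\left|f\left(z_2\right) - f\left(z_1\right)\right| \le \int_0^1 \left|f'\left(\gamma\left(t\right)\right)\right|\, \left|z_2 - z_1\right|\, dt \le \left(1-\sigma\right)\left|z_2 - z_1\right|,
\]
which is the desired inequality.

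There is essentially no hard part here: the only points requiring care are invoking convexity to keep the segment inside $\Omega$ (on a non-convex domain one could be forced onto a longer path and ruin the constant), and remembering that the mean value theorem in equality form fails for complex-valued functions, so one must integrate the derivative rather than appeal to a mean value equality directly.
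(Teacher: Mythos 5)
Your proof is correct and follows essentially the same route as the paper: both arguments integrate the derivative along the straight segment $\gamma\left(t\right)=\left(1-t\right)z_1+tz_2$, use convexity to keep the segment inside $\Omega$, and bound the integrand by $\left(1-\sigma\right)\left|z_2-z_1\right|$. The only cosmetic difference is that the paper phrases the pointwise bound via the Cauchy--Riemann equations (viewing $df$ as a rotation composed with a homothety), whereas you use the complex chain rule and the fundamental theorem of calculus directly, which is if anything slightly cleaner.
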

\begin{proof}
By the Cauchy-Riemann equations, $df$, the derivative of $f$, seen as a linear transformation
from $\mathbb{R}^2$ to $\mathbb{R}^2$ is the composition
of a rotation and a homothety (a reflection is discarded as holomorphic
functions preserve orientation) whose scaling factor is precisely
given by $\left|f'\left(z\right)\right|\leq 1-\sigma$. 
It follows then that
$df$ shortens the length  of vectors by a factor of $1-\sigma$. Therefore, if we let $z_1,z_2\in \Omega$ and
consider the segment joining these two points
\[
\gamma\left(t\right)=\left(1-t\right)z_1+tz_2, \quad 0\leq t\leq 1,
\]
we have that (here $\left\|\cdot\right\|$ denotes the euclidean norm)
\begin{eqnarray*}
\left|f\left(z_2\right)-f\left(z_1\right)\right|&\leq& \int_0^{1}
\left\|df\left(\gamma'\left(t\right)\right)\right\|\,dt\\
&\leq& \left(1-\sigma\right)\int_0^1\left\|\gamma'\left(t\right)\right\|\,dt=
\left(1-\sigma\right)\left|z_2-z_1\right|
\end{eqnarray*}
which is what we wanted to prove.
\end{proof}
So let $f:B_1\left(0\right)\longrightarrow \mathbb{C}$ be a holomorphic function
such that $f\left(0\right)=0$ and $f'\left(0\right)=1$, which without
loss of generality we may assume bounded. We let $\beta\in B_1\left(0\right)$ (from here onwards, the ball of radius $\rho$ centered
at $a$ will be denoted by $B_{\rho}\left(a\right)$), and expand $f$ around $\beta$:
\[
f\left(z\right)= f\left(\beta\right)+
\sum_{n\geq 1}\frac{f^{\left(n\right)}\left(\beta\right)}{n!}
\left(z-\beta\right)^{n}.
\]
We choose $\beta$ such that
\[
\left|f'\left(\beta\right)\right|=
\max_{\left|z\right|=\left|\beta\right|}\left|f'\left(z\right)\right|,
\]
and adopt the notation
\[
M\left(\beta\right)=\max_{\left|z\right|=\left|\beta\right|}\left|f'\left(z\right)\right|.
\]
Fix $\beta_0=\beta_{\gamma}$ and let $\gamma$ be such that 
\begin{equation}
\label{definitionforbeta}
\left|\beta_0\right|\prod_{j\geq 1}\left(1+\gamma^{-j}\right)= 1.
\end{equation}
Construct the finite, maximal, sequence
$\epsilon_j$, $j=0,1,2,\dots,m-1$, $m\geq 1$, $\epsilon_0=0$, with the property that 
\[
\frac{M\left(\beta_{n+1}\right)}{M\left(\beta_n\right)}=\gamma,
\quad \left|\beta_{n}\right|=\prod_{j=1}^{n}\left(1+\epsilon_j\right)\left|\beta_{0}\right|.
\]
To this sequence, we add $\epsilon_{m}\geq 0$ such that
\begin{equation}
\label{definitionepsilon}
\left|\beta_0\right|\prod_{j=1}^{m}\left(1+\epsilon_{j}\right)= 1.
\end{equation}
Notice then that $\beta_{m}=1$ and that $M\left(\beta_{m}\right)/M\left(\beta_{m-1}\right)\leq \gamma$.
Also, notice that at least for one $k$, it must hold that $\epsilon_k\geq \gamma^{-k}$.

The fixed point problem we want to solve is the following
\begin{equation}
\label{fixedpoint1}
z=\frac{w-f\left(\beta_n\right)}{f'\left(\beta_n\right)}+\beta_n
-\sum_{k\geq 2}\frac{f^{\left(k\right)}\left(\beta_n\right)}
{k!f'\left(\beta_n\right)}
\left(z-\beta_n\right)^{k}=:g_{w}\left(z\right).
\end{equation}
The main idea now (and of the whole argument) is to show that there is a disk, say $D$,
such that if $w\in D$, we can restrict $g_w$ to a ball
centered at $\beta_n$, say $D'$, whose radius is independent of $w$, so that 
$g_w:D'\longrightarrow D'$, and so that it is a contraction. Then Banach's
contraction mapping principle tells us that (\ref{fixedpoint1}) has a unique solution
and we can conclude that the radius of $D$ is a bound from below for Bloch's constant.

Therefore, as we are going to make use of Lemma \ref{contractionlemma}, we need the left-hand side of the previous identity 
to satisfy the inequality
\begin{equation}
\label{contractioncondition}
\left|g'_w\left(z\right)\right|\leq\sum_{k\geq 2}\left|
\frac{f^{\left(k\right)}\left(\beta_n\right)}
{\left(k-1\right)!f'\left(\beta_n\right)}
\right|\left|z-\beta_n\right|^{k-1}<1.
\end{equation}
We must then find $\eta>0$ such that for all $z\in B_{\eta}\left(\beta_n\right)$
the previous inequality holds, and also 
we will also need this $\eta$ to be such that
\[
g_{w}:B_{\eta}\left(\beta_n\right)
\longrightarrow B_{\eta}\left(\beta_n\right),
\]
which will be fulfilled if
\begin{equation}
\label{contractioncondition2}
\left|w-f\left(\beta_n\right)\right|
\leq \eta M\left(\beta_n\right)
-\sum_{k\geq 2} \frac{\left|f^{\left(k\right)}\left(\beta_n\right)\right|}{k!}\eta^k.
\end{equation}
The right hand side of (\ref{contractioncondition2}) will give an estimate for the radius of
the disk $D$ mentioned above.

Let us work a bit on inequality (\ref{contractioncondition}).
By Cauchy's estimates we know that
\[
\left|f^{\left(k\right)}\left(\beta_n\right)\right|
\leq \frac{M\left(\beta_{n+1}\right)\left(k-1\right)!}
{\left(\epsilon_{n+1}\left|\beta_n\right|\right)^{k-1}},
\]
so we can change the contraction condition (\ref{contractioncondition}) by the more stringent
\[
\frac{M\left(\beta_{n+1}\right)}{M\left(\beta_n\right)}
\sum_{k\geq 2}\left(\frac{\eta}{\epsilon_{n+1}\left|\beta_n\right|}\right)^{k-1}
\leq 1-\sigma,
\]
which is equivalent to the inequality
\[
\eta\leq \frac{1-\sigma}{1-\sigma+\gamma}\epsilon_{n+1}\left|\beta_n\right|.
\]
Let us now concentrate on inequality (\ref{contractioncondition2}). First notice that 
\begin{eqnarray*}
\sum_{k\geq 2} \frac{\left|f^{(k)}\left(\beta_n\right)\right|}{k!}\eta^k
&=&M\left(\beta_n\right)\sum_{k\geq 2} 
\frac{\left|f^{(k)}\left(\beta_n\right)\right|}
{k!\cdot M\left(\beta_n\right)}\eta^k\\
&\leq&
M\left(\beta_n\right)\sum_{k\geq 2}
\frac{\left(k-1\right)!M\left(\beta_{n+1}\right)}{k! M\left(\beta_n\right)}
\frac{\eta^k}{\left(\epsilon_{n+1}\beta_n\right)^{k-1}}\\
&\leq&
M\left(\beta_n\right)\frac{1}{2}\eta
\left(1-\sigma\right).
\end{eqnarray*}
We then can change (\ref{contractioncondition2}) by the stronger inequality
\begin{eqnarray*}
\left|w-f\left(\beta_n\right)\right|&\leq& \eta M\left(\beta_n\right)-
\frac{1-\sigma}{2}\eta M\left(\beta_n\right)\\
&=&\left(\frac{1}{2}+\frac{\sigma}{2}\right)\eta M\left(\beta_n\right).
\end{eqnarray*}
This shows that Bloch's constant is bigger than the optimum of
\begin{equation}
\label{blochestimate1}
\left(\frac{\sigma}{2}+\frac{1}{2}\right)\frac{1-\sigma}{1-\sigma+\gamma}\epsilon_{n+1}\left|\beta_n\right| M\left(\beta_n\right),
\quad n=0,1,2,\dots, m-1.
\end{equation}

To estimate (\ref{blochestimate1}) from we below, we estimate $\left|\beta_{\gamma}\right|$, as we clearly have that
$\left|\beta_{\gamma}\right|\leq \left|\beta_n\right|$.
Starting from (\ref{definitionforbeta}), we obtain
\[
\log \left|\beta_{\gamma}\right|=-\sum_{j\geq 1}\log\left(1+\gamma^{-j}\right),
\]
and then by Taylor's theorem
\[
\log\left(1+\gamma^{-j}\right)\leq \gamma^{-j}-\frac{1}{2}\gamma^{-2j}+\frac{1}{3}\gamma^{-3j},
\]
so we have
\[
\log \left|\beta_{\gamma}\right|\geq -\frac{1}{\gamma-1}+\frac{1}{2}\frac{1}{\gamma^2-1}-\frac{1}{3}\frac{1}{\gamma^3-1},
\]
that is,
\[
\left|\beta_{\gamma}\right|\geq e^{-\frac{1}{\gamma-1}+\frac{1}{2}\frac{1}{\gamma^2-1}-\frac{1}{3}\frac{1}{\gamma^3-1}}.
\]
Observe that there must be a $n\leq m-1$ for which
$\epsilon_{n+1}M\left(\beta_n\right)\geq \dfrac{M\left(\beta_{\gamma}\right)}{\gamma}$, this because of the definition
of the $\beta_k$'s and the observation right after (\ref{definitionepsilon}).
Hence, $f$ covers a schlicht disk of radius at least
\begin{equation}
\label{Blochfrombelow}
\left(\frac{\sigma}{2}+\frac{1}{2}\right)
\frac{1-\sigma}{1-\sigma+\gamma}\frac{1}{\gamma}
e^{-\frac{1}{\gamma-1}+\frac{1}{2}\frac{1}{\gamma^2-1}-\frac{1}{3}\frac{1}{\gamma^3-1}}M\left(\beta_{\gamma}\right).
\end{equation}
The previous inequality already proves Bloch's theorem, since by the
Maximum Principle $M\left(\beta_{\gamma}\right)\geq 1$. Maximising the
expression above we obtain $B\geq 0.0355493>1/29$.

However, we can do a bit better.
Indeed, under the assumption that $M\left(\beta_{\gamma}\right)>\gamma$, (\ref{Blochfrombelow})
implies that 
$f$ would cover a schlicht disk of radius
\[
\left(\frac{1}{2}+\frac{\sigma}{2}\right)\frac{1-\sigma}{1-\sigma+\gamma}
e^{-\frac{1}{\gamma-1}+\frac{1}{2}\frac{1}{\gamma^2-1}-\frac{1}{3}\frac{1}{\gamma^3-1}}.
\]
On the other hand, if we apply the previous reasoning, as we did to equation (\ref{fixedpoint1}), under the assumption
that $M\left(\beta_{\gamma}\right)\leq \gamma$ to the equation
\begin{equation}
\label{contractioncondition3}
z=w-\sum_{k=2}^{\infty}\frac{f^{\left(k\right)}\left(0\right)}{k!}z^k,
\end{equation}
we have that for the right-hand side to be a contraction we need that
\[
\sum_{k=2}^{\infty}\frac{\left|f^{\left(k\right)}\left(0\right)\right|}{\left(k-1\right)!}
\left|z\right|^{k-1}\leq 1-\sigma,
\]
which, by using Cauchy's estimates, becomes 
\[
\sum_{k=2}^{\infty}M\left(\beta_{\gamma}\right)
\left|\frac{z}{\beta_{\gamma}}\right|^{k-1}\leq 1-\sigma,
\]
which gives the restriction
\[
\left|z\right|\leq \frac{1-\sigma}{1-\sigma+M\left(\beta_{\gamma}\right)}\left|\beta_{\gamma}\right|.
\]
Also, we need, for the function defined
by the right-hand side of (\ref{contractioncondition3}) to map a disk 
of a given radius centered at 0 into the same disk, that
\[
\left|w\right|\leq \left(\frac{1}{2}+\frac{\sigma}{2}\right)
\frac{1-\sigma}{1-\sigma+M\left(\beta_{\gamma}\right)}\left|\beta_{\gamma}\right|.
\]
Therefore, if $M\left(\beta_{\gamma}\right)\leq \gamma$ (and hence, by the
discussion above in any case),
we find that $f$ has a schlicht disk of radius at least 
\[
\left(\frac{1}{2}+\frac{\sigma}{2}\right)\frac{1-\sigma}{1-\sigma+\gamma}
e^{-\frac{1}{\gamma-1}+\frac{1}{2}\frac{1}{\gamma^2-1}-\frac{1}{3}\frac{1}{\gamma^3-1}}.
\]
Hence, if we optimise the previous function we obtain
the best lower bound for Bloch's constant that this method can provide.
Using Wolfram we have obtained an estimate from below of
approximately $0.0813782$ which is larger than $1/13$. 


\section{Refining the method: Landau's simplification and the Earle-Hamilton 
Fixed Point Theorem}
\label{UsingEarleHamilton}

We shall show a small refinement that allows an interesting improvement
 the estimates given so far for
Bloch's constant. 
But first, we need to introduce an new tool, the Earle-Hamilton
fixed point theorem, and a simplification of the problem due to E. Landau.
\subsection{The Earle-Hamilton Fixed Point Theorem}
 
 Let us recall the statement of this beautiful theorem.
 
 \begin{theorem}
 Let $D\subset \mathbb{C}$ be bounded, and let $f:D\longrightarrow D$ be a holomorphic function
 such that the distance between
 $f\left(D\right)$ and $\mathbb{C}\setminus D$ is bigger or equal than a fixed positive constant.
 Then $f$ has a unique fixed point.
 \end{theorem}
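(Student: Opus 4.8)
The plan is to prove this as a consequence of the Banach contraction principle, exactly in the spirit of the rest of the paper, but with the Euclidean metric replaced by the intrinsic Kobayashi--Royden (equivalently, for the present purposes, the Carath\'eodory) metric of the domain $D$. Write $\delta=\mathrm{dist}\left(f\left(D\right),\mathbb{C}\setminus D\right)>0$ and let $d$ denote the (finite) Euclidean diameter of $D$. The first observation is that $\overline{f\left(D\right)}$ is a \emph{compact} subset of $D$: it is closed and bounded, and it stays at distance $\ge\delta$ from the boundary, so it cannot meet $\partial D$. This compactness is what will allow a completeness argument at the end, and it also shows that every fixed point of $f$ necessarily lies in $K:=\overline{f\left(D\right)}$.

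The heart of the argument is to show that $f$ is a \emph{strict} contraction for the intrinsic metric. Recall that, $D$ being bounded (hence hyperbolic), it carries an infinitesimal metric $\alpha_D\left(x;v\right)$ which is positively homogeneous in the tangent vector $v$ and which every holomorphic self-map $h$ of $D$ contracts, $\alpha_D\left(h\left(x\right);h'\left(x\right)v\right)\le\alpha_D\left(x;v\right)$. To upgrade this to a strict bound for $f$, fix $x\in D$ and introduce the auxiliary map
\[
g\left(y\right)=f\left(y\right)+\frac{\delta}{d}\left(f\left(y\right)-f\left(x\right)\right).
\]
Since $\left|g\left(y\right)-f\left(y\right)\right|=\frac{\delta}{d}\left|f\left(y\right)-f\left(x\right)\right|\le\frac{\delta}{d}\,d=\delta$ and $f\left(y\right)\in f\left(D\right)$, the point $g\left(y\right)$ lies in the disk $B_{\delta}\left(f\left(y\right)\right)\subset D$; hence $g:D\longrightarrow D$ is holomorphic. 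Applying the contraction property of $\alpha_D$ to $g$ at the point $x$, and noting $g\left(x\right)=f\left(x\right)$ and $g'\left(x\right)=\left(1+\frac{\delta}{d}\right)f'\left(x\right)$, homogeneity gives
\[
\left(1+\frac{\delta}{d}\right)\alpha_D\left(f\left(x\right);f'\left(x\right)v\right)=\alpha_D\left(f\left(x\right);g'\left(x\right)v\right)\le\alpha_D\left(x;v\right),
\]
so that $\alpha_D\left(f\left(x\right);f'\left(x\right)v\right)\le k\,\alpha_D\left(x;v\right)$ with $k=\frac{d}{d+\delta}<1$. Integrating this infinitesimal inequality along paths yields the integrated statement $\rho_D\left(f\left(z\right),f\left(w\right)\right)\le k\,\rho_D\left(z,w\right)$ for the associated distance $\rho_D$.

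It then remains to run the fixed point argument. The distance $\rho_D$ is a genuine metric inducing the Euclidean topology on $D$, so $K=\overline{f\left(D\right)}$, being Euclidean-compact, is complete for $\rho_D$. Now $f$ maps $K$ into $f\left(D\right)\subset K$ and is a contraction there with constant $k<1$, so Banach's fixed point theorem provides a unique fixed point of $f$ in $K$; since any fixed point of $f$ on $D$ must lie in $f\left(D\right)\subset K$, this fixed point is unique in all of $D$.

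The step I expect to require the most care is the passage from the infinitesimal contraction $\alpha_D\left(f\left(x\right);f'\left(x\right)v\right)\le k\,\alpha_D\left(x;v\right)$ to the global contraction of $\rho_D$, together with the verification that $\rho_D$ is a complete metric on the compact set $K$ (rather than on all of $D$, where completeness may fail). Working with the infinitesimal form of the metric, as above, is precisely what sidesteps the awkward point that the Euclidean segment between $f\left(z\right)$ and $f\left(w\right)$ need not lie inside $D$; all the geometric content is concentrated in the single estimate $\left|g\left(y\right)-f\left(y\right)\right|\le\delta$.
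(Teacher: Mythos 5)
Your proof is correct, but there is nothing in the paper to compare it with: the authors only \emph{quote} this statement as the (known) Earle--Hamilton fixed point theorem and use it as a black box, giving no proof. What you have written is essentially the classical Earle--Hamilton argument specialised to one variable: the dilation trick $g\left(y\right)=f\left(y\right)+\tfrac{\delta}{d}\left(f\left(y\right)-f\left(x\right)\right)$ forces a \emph{strict} infinitesimal contraction of the Carath\'eodory/Kobayashi metric with factor $k=\tfrac{d}{d+\delta}$, and Banach's principle is then applied on the $\rho_D$-complete set $\overline{f\left(D\right)}$. All the main steps are sound: the Schwarz--Pick contraction property of $\alpha_D$, the homogeneity argument, the integration along paths (legitimate for the Kobayashi--Royden metric, or for the inner Carath\'eodory distance), and the completeness of the Euclidean-compact set $K$ for $\rho_D$ because $D$ is bounded, hence $\rho_D$ dominates a multiple of the Euclidean distance. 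One small point deserves a sentence: you derive $\left|g\left(y\right)-f\left(y\right)\right|\leq\delta$ and then place $g\left(y\right)$ in the \emph{open} disk $B_{\delta}\left(f\left(y\right)\right)$. This is actually fine because $f\left(x\right),f\left(y\right)$ are interior points of the open set $D$, so $\left|f\left(y\right)-f\left(x\right)\right|<d$ strictly (the diameter of an open set is not attained at interior pairs), but the cleanest fix is simply to use the factor $\tfrac{\delta}{2d}$ instead of $\tfrac{\delta}{d}$, which costs nothing since any $k<1$ suffices. It is also worth noting, as a stylistic remark, that your route fits the spirit of the paper well: it exhibits the Earle--Hamilton theorem itself as an instance of the contraction mapping principle, just in a different metric.
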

 
 This theorem allows us to get rid of the constraint generated by the inequality 
 deduced from the fact that we need $g_w$ to be a contraction. Hence, we just
 need to work with the constraint that tells us that $g_w$ goes
 from a ball centered at the origin of certain radius to a ball also centered at
 the origin of a slightly smaller radius (as small as desired, so that we can assume 
 in our estimates that it has the same radius).
 
 \subsection{Landau's simplification}
 Landau in \cite{Landau} showed that to obtain Bloch's constant, one can look
for it among the functions that satisfy
\[
\left|f'\left(z\right)\right|\leq \frac{1}{1-\left|z\right|^2}, 
\quad f'\left(0\right)=1,\quad f\left(0\right)=0.
\]
We say that a function satisfying the above set of constraints satisfies
Landau's simplification or that it is a Bloch function.

Landau's simplification implies that the Taylor series of $f$ can be written as
\[
f\left(z\right)=z+\frac{1}{3}a_2 z^3+\sum_{k\geq 4}\frac{f^{(k)}\left(0\right)}{k!}z^k,
\quad \mbox{with}\quad \left|a_2\right|\leq 1.
\]
 
\subsection{The estimate for Bloch's constant}
Given $f$ a Bloch function, as defined above, we can write its derivative as
\[
f'\left(z\right)=1+a_2z^2+a_3z^3+\dots.
\]
Hence we can estimate, for the coefficients of this expansion, with
$n\geq 4$, that for any $0<r<1$ (see the proof of Lemma 1 in \cite{Bonk})
\[
\left|a_n\right|r^{2n}\leq \frac{1}{\left(1-r^2\right)^2}-1-\left|a_2\right|^2r^4-\left|a_3\right|^2r^6.
\]
Thus,
\begin{equation}
\label{Bonkcoeffineq2}
\left|a_n\right|\leq \frac{1}{r^{n-1}}\sqrt{\frac{2-r^2}{\left(1-r^2\right)^2}-\left|a_2\right|^2r^2-\left|a_3\right|^2r^4},
\quad 0<r<1.
\end{equation}
The fixed point problem we must solve is
\begin{equation}
\label{EarleHamFPPeq}
z=w-\sum_{k\geq 2}\frac{a_k}{k+1}z^{k+1}=:g_w\left(z\right).
\end{equation}
According
to the Earle-Hamilton fixed point theorem,
equation (\ref{EarleHamFPPeq}) will have a unique solution as long as, for a
given $\rho$, $w$ satisfies the following estimate for every $z\in B_{\rho}\left(0\right)$
\begin{equation}
\label{mappingineq}
\left|w\right|+\frac{\left|a_2\right|}{3}\left|z\right|^3+
\frac{\left|a_3\right|}{4}\left|z\right|^4+\sum_{k\geq 4}\frac{\left|a_k\right|}{k+1}\left|z\right|^{k+1}
\leq \rho.
\end{equation}

We fix $\rho$ as 0.45,
which means that we have fixed a ball of radius 0.45 centered at the origin in the
$z$-plane. Now we must 
show that $g_w$ sends this ball into itself.
This will be so, using (\ref{mappingineq}) and (\ref{Bonkcoeffineq2}), as long as 
\[
\left|w\right|
+\frac{\left|a_2\right|}{3}\left|z\right|^3+
\frac{\left|a_3\right|}{4}\left|z\right|^4 +\frac{1}{5}
r^2\sqrt{\frac{2-r^2}{\left(1-r^2\right)^2}-\left|a_2\right|^2r^2-\left|a_3\right|r^4}
\left(\frac{\left|z/r\right|^5}{1-\left|z/r\right|}\right)\leq 0.45.
\]
Here 
$a_3$ must satisfy the restriction
\begin{equation}
\label{restrictiona3}
\left|a_3\right|\leq \frac{1}{r^2}\sqrt{\max\left(\frac{2-r^2}{\left(1-r^2\right)^2}-\left|a_2\right|^2r^2,0\right)}.
\end{equation}
Notice then that, for $a_2$ and $r$ fixed, the expression
\[
\frac{\left|a_2\right|}{3}\left|z\right|^3+
\frac{\left|a_3\right|}{4}\left|z\right|^4 +\frac{1}{5}
r^2\sqrt{\frac{2-r^2}{\left(1-r^2\right)^2}-\left|a_2\right|^2r^2-\left|a_3\right|r^4}
\left(\frac{\left|z/r\right|^5}{1-\left|z/r\right|}\right)
\]
is increasing in $\left|z\right|$. Hence, once we have fixed the radius of the ball in the $z$-plane
(in this case 0.45) as well as the parameter $r$, if we maximize the expression above as a 
function of $\left|a_2\right|$ and $\left|a_3\right|$ subject to restriction (\ref{restrictiona3}) then the difference between 0.45 and this value will give the radius
of a ball in the $w$-plane for which problem (\ref{EarleHamFPPeq}) has a unique solution
(i.e., the radius of the schlicht disk we are after).
Choosing $r=0.8$, and doing as described above, we obtain that $B\geq 0.347493$.

\section{Can the method be pushed a bit further?}
\label{Pushingthemethod}

As it is now, the estimate given in the previous paragraph seems to be the best estimate that can
be obtained by using fixed point methods. However, we will show 
how these methods might be improved. Let us consider the polynomial
\[
f\left(z\right)=z-\frac{1}{3}z^3-\frac{1}{4}\left(4.66922\right)z^4.
\]
In this case, using the methods described above, we obtain that
$B$ might be taken as the optimal value of
\[
p\left(r\right)=r-\frac{1}{3}r^3-\frac{1}{4}\left(4.66922\right)r^4,
\]
which is obtained at $r\sim 0.534759$, and gives $p\sim 0.38832$, which means that
$f$ covers a schlicht disk of this radius.

We will show directly that this polynomial covers a schlicht disk of radius 
bigger than $0.438$.

First, we let $-1<b<1$, and rewrite the polynomial as its Taylor series centered at
$b$. In the case of the polynomial given above
\[
f\left(z\right)=f\left(b\right)+f'\left(b\right)\left(z-b\right)+F_2\left(z\right),
\]
where
\begin{eqnarray*}
f\left(z\right)&=&b+\frac{1}{3}b^3+\frac{1}{4}\left(4.66922\right)b^4
+\left(1+b^2+\left(4.66922\right)b^3\right)\left(z-b\right)\\
&&\frac{1}{2}\left(2b+3\left(4.66922\right)b^2\right)\left(z-b\right)^2
+\frac{1}{6}\left(2+6\left(4.66922\right)b\right)\left(z-b\right)^3\\
&&+\frac{1}{24}\left(4\times 4.66922\right)\left(z-b\right)^4.
\end{eqnarray*}

The fixed point problem we must solve for this polynomial is
\[
z=\frac{1}{f'\left(b\right)}\left(w-f\left(b\right)-F_2\left(z,b\right)\right)+b,
\]
where $F_2\left(z,b\right)$ has the obvious meaning, and $b$ is also chosen so that $f'\left(b\right)\neq 0$.

We choose then in the $z$-plane a ball centered at $b$ of radius $\rho$, and
we want the right-hand side to map the ball $B_{\rho}\left(b\right)$
into itself. This will happen as long as, for $b$ fixed, $w$
satisfies 
\[
\left|w-f\left(b\right)\right|+
\left|F_2\left(z,b\right)\right|\leq \left|f'\left(b\right)\right|\rho.
\]
Observe then that $F_2\left(z,b\right)$ once $b$ is fixed, being a polynomial, is a holomorphic function
 so its maximum modulus occurs at the circle $\left|z-b\right|=\rho$. Therefore, to obtain
an estimate from below for the radius of the schlicht disk we are looking for, it is
enough to give $b$ and $\rho$ any value and find the maximum modulus 
of $F_2$. Then Bloch constant will be bounded from below by
\[
\left|f'\left(b\right)\right|\rho-\max_{\left|z-b\right|=\rho}\left|F_2\left(z,b\right)\right|.
\]
We have chosen $b=-0.07$ and $\rho=0.59$, which shows
that $f$ covers a schlicht disk of radius $0.43806$.

Interestingly enough, $f$ is not a Bloch function. Indeed, a theorem of Chen-Gauthier (see \cite{Chen1}) guarantees
that the coefficient $a_3$ of a Bloch function must satisfy the estimate
$\left|a_3\right|\leq 4.2$. If we take this into consideration and look at the polynomial
\[
g\left(z\right)=z-\frac{1}{3}z^3-\frac{1}{4}\left(4.2\right)z^4,
\]
then we obtain an estimate from below for the radius of a schlicht disk covered by $g$ of $0.446896$.


\section{Several complex variables}
\label{Severalvariables}

\subsection{Preliminaries and Notation}
We now apply our methods to the case of several complex variables. But let us firs introduce some notation.

Given a matrix $A$, $\lambda\left(A\right)$ and $\Lambda\left(A\right)$
represent the square root of the minimum and the maximum of the eigenvalues
of $A^*A$. In what follows, $\left\|A\right\|$ represents the operator norm
of $A$. This norm satisfies the following well-known inequalities
\[
\lambda\left(A\right)\leq \left\|A\right\|=\Lambda\left(A\right).
\]
\[
\Lambda\left(A^{-1}\right)=\frac{1}{\lambda\left(A\right)}.
\]
In general, $k=\left(k_1,k_2,\dots,k_m\right)$ will represent a multiindex, and related to a multiindex
we define $\left|k\right|=k_1+k_2+\dots+k_m$ and $k!=k_1!k_2!\dots k_m!$. Regarding differentiation
we write
\[
F^{\left(k\right)}\left(z\right)=\frac{\partial^{\left|k\right|}F}{\partial z_1^{k_1}\partial z_2^{k_2}\dots\partial z_m^{k_m}}.
\]
We will write $z=\left(z_1,z_2,\dots,z_m\right)$, and
obviously $z^{k}=z_1^{k_1}z_2^{k_2}\dots z_m^{k_m}$. For an element $\mathbb{C}^m$, we define
\[
\left|w\right|=\sqrt{\left|w_1\right|^2+\left|w_2\right|^2+\dots+\left|w_m\right|^2},
\]
and 
\[
\left|w\right|_{\infty}=\max_{j=1,\dots,m}\left|w_j\right|.
\]
The ball centered at $\beta$ of radius $r$ will be denoted by $B_r\left(\beta\right)$ and is defined as usual:
\[
B_r\left(\beta\right):=\left\{z\in \mathbb{C}^m:\,\left|z-\beta\right|<r\right\}.
\]
The polydisk centered at $\beta$ of radius $r$, denoted by $D\left(\beta,r\right)$, is defined
as
\[
D\left(\beta,r\right):=\left\{z\in\mathbb{C}^m:\,\left|z-\beta\right|_{\infty}<r\right\},
\]
and its closure will be denoted by $\overline{D}\left(\beta,r\right)$.
\subsection{Wu $K$-mappings.}
We will consider holomorphic maps 
$F:D\left(0,1\right)\subset \mathbb{C}^m\longrightarrow\mathbb{C}^m$,
where $D\left(0,1\right)$ is the unit polydisk,
that satisfy the following estimate
\begin{equation}
\label{Wucondition}
\left\|F'\left(z\right)\right\|\leq K\left|\mbox{det}\left(F'\left(z\right)\right)\right|^{\frac{1}{m}}.
\end{equation}
These maps are called Wu $K$-mappings. As a normalisation
we impose that $\left|\mbox{det}\left(F'\left(0\right)\right)\right|=1$, and we will
assume without loss of generality that $F\left(0\right)=0$ and
that $F$ is bounded. We will show that 
for this family of maps Bloch's Theorem holds.

We begin our proof just as before, by
picking a sequence $\beta_j\in B_1\left(0\right)$, $j=0,1,2,\dots$ as
follows. First, pick $\beta_0=\beta_{\gamma}$
such that $\left|\mbox{det}\left(F'\left(z\right)\right)\right|$ reaches its
maximum at the boundary of the polydisk of radius $\left|\beta_0\right|_{\infty}$ centered at the origin,
at the point $\beta_0$. 
Then choose $\gamma>1$ such that
\[
\left|\beta_{\gamma}\right|_{\infty}\prod_{j=1}^{\infty}\left(1+\gamma^{-j}\right)=1
\]
and a maximal
finite sequence $\epsilon_j$ such that
\[
\frac{\left|\mbox{det}\left(F\left(\beta_{n+1}\right)\right)\right|}
{\left|\mbox{det}\left(F\left(\beta_n\right)\right)\right|}=\gamma^m,
\quad r_{n+1}=r_n\left(1+\epsilon_n\right),
\quad r_j=\left|\beta_j\right|_{\infty},
\]
with $\epsilon_0=0$, $j=0,1,2,\dots,m$,
and $\left|\mbox{det}\left(F'\left(\beta_n\right)\right)\right|$ is the maximum of $\left|\mbox{det}\left(F'\left(z\right)\right)\right|$
in the polydisk of radius $r_n$ centered at the origin.

In this case, the fixed point problem we must solve is 
\[
z=F\left(\beta_n\right)^{-1}\left(w-F\left(\beta_n\right)\right)+\beta_n
-\sum_{\left|k\right|\geq 2}\frac{F\left(\beta_n\right)^{-1}F^{\left(k\right)}\left(\beta_n\right)}
{k!}
\left(z-\beta_n\right)^{k}.
\]
Let us define
\[
g_w:=F\left(\beta_n\right)^{-1}\left(w-F\left(\beta_n\right)\right)+\beta_n
-\sum_{\left|k\right|\geq 2}\frac{F\left(\beta_n\right)^{-1}F^{\left(k\right)}\left(\beta_n\right)}
{k!}
\left(z-\beta_n\right)^{k}.
\]
As in the one dimensional setting, 
a map $G:\Omega\subset \mathbb{C}^m\longrightarrow \mathbb{C}^m$, $\Omega$ convex, which satisfies
that $\left\|G'\left(z\right)\right\|\leq 1-\sigma$, $\sigma>0$, for all $z\in \Omega$, is a contraction.
So again we impose the following condition on $F$, so we make sure $g_w$ is a contraction, 
\[
\sum_{\left|k\right|\geq 2}\frac{\left\|F'\left(\beta_n\right)^{-1}\right\|\left|F^{\left(k\right)}\left(\beta_n\right)\right|}
{\left(k-1\right)!}
\eta^{\left|k\right|-1}\leq 1-\sigma.
\]
Always keep in mind that $k$ is a multiindex.
Now, using Cauchy's estimates, and the fact that for any matrix $A=\left(a_{ij}\right)$
the inequality $\left|a_{ij}\right|\leq \left\|A\right\|$ holds, and that
the ball of radius $\epsilon_{n+1}r_n$ centered at $\beta_n$ is contained in the polydisk
$D\left(0,r_{n+1}\right)$, the previous inequality
can be replaced by the (stronger) condition 
\begin{equation}
\label{contractionconditionscvm}
\sum_{\left|k\right|= 2}^{\infty}
\left|k\right|^m
\frac{\left\|F'\left(\beta_n\right)^{-1}\right\|\max_{z\in\overline{D}\left(0,r_{n+1}\right)}\left\|F'\left(z\right)\right\|}
{\left(\epsilon_{n+1}r_n\right)^{\left|k\right|-1}}
\eta^{\left|k\right|-1}\leq 1-\sigma.
\end{equation}

In what follows, we shall use the notation $\lambda_F\left(z\right)$ and 
$\Lambda_F\left(z\right)$ to indicate $\lambda\left(F'\left(z\right)\right)$
and $\Lambda\left(F'\left(z\right)\right)$ respectively.
Now, we can also estimate, just as before, to make sure that
$g_w:B_{\eta}\left(\beta_n\right)\longrightarrow B_{\eta}\left(\beta_n\right)$, that
\begin{eqnarray*}
\left\|F'\left(\beta_n\right)^{-1}\right\|\left|w-F\left(\beta_n\right)\right|
&\leq& \eta - \sum_{k\geq 2}\frac{\left\|F'\left(\beta_n\right)^{-1}\right\|\max_{z\in\overline{D}\left(0,r_{n+1}\right)}\left\|F'\left(z\right)\right\|}
{k!}
\eta^{\left|k\right|}\\
&\leq& \sigma\eta.
\end{eqnarray*}
Therefore
\[
\left\|w-F\left(\beta_n\right)\right\|\leq \sigma \eta \lambda_F\left(\beta_n\right).
\]
To finish the proof we would need to estimate $\eta$ in terms of $\epsilon_{n+1}$ and $\beta_n$,
and then the argument can proceed as in Section \ref{prooffixedpoint}. The estimate we need for 
$\eta$
is a consequence of (\ref{contractionconditionscvm}), so let us work on this inequality.

First, we estimate
\begin{eqnarray*}
\left\|F´\left(\beta_{n}\right)^{-1}\right\|\max_{\left|z\right|\leq\left|\beta_{n+1}\right|}\left\|F'\left(z\right)\right\|
&=& \frac{\max_{z\in\overline{D}\left(0,r_{n+1}\right)}\Lambda_F\left(z\right)}{\lambda\left(\beta_n\right)}\leq
\frac{K\left|\mbox{det}\left(F'\left(\beta_{n+1}\right)\right)\right|^{\frac{1}{m}}}{\lambda_F\left(\beta_n\right)}\\
&\leq&\frac{K^{m}\left|\mbox{det}\left(F'\left(\beta_{n+1}\right)\right)\right|^{\frac{1}{m}}}
{\left|\mbox{det}\left(F'\left(\beta_{n}\right)\right)\right|^{\frac{1}{m}}}=K^{m}\gamma,
\end{eqnarray*}
where we have used the following fact, which holds for Wu $K$-mappings (and whose proof we leave to the interested reader):
\begin{equation}
\label{Wusmalleigen}
\lambda_{F}\left(\beta_{n}\right)\geq \frac{1}{K^{m-1}}\left|\mbox{det}\left(F'\left(\beta_n\right)\right)\right|^{\frac{1}{m}}.
\end{equation}
So, from (\ref{contractionconditionscvm}), we obtain:
\[
\sum_{\left|k\right|= 2}^{\infty}
\left|k\right|^m
K^{m} \gamma
\left(\dfrac{\eta}{\epsilon_{n+1}\beta_{n}}\right)^{\left|k\right|-1}\leq 1-\sigma.
\]
From the previous inequality we shall produce an inequality for 
$\eta$.
Write $\eta=u\cdot\left(\epsilon_{n+1}\beta_{n}\right)\dfrac{1}{K^m\gamma}$. This yields
\[
K^m\gamma\sum_{\left|k\right|= 2}^{\infty}
\left|k\right|^m
\left(\frac{u}{K^m\gamma}\right)^{\left|k\right|-1}\leq 
\sum_{\left|k\right|= 2}^{\infty}\left|k\right|^m\left(u\right)^{\left|k\right|-1}.
\]
Consider then the function
\[
f\left(u\right):=\sum_{\left|k\right|= 2}^{\infty}\left|k\right|^m\left(u\right)^{\left|k\right|-1}
=\sum_{\left|k\right|=2}^{\infty}\left(\left|k\right|^{\frac{m}{\left|k\right|-1}}u\right)^{\left|k\right|-1}.
\]
Observe that $g\left(x\right)=x^{\frac{1}{x-1}}$, $x\geq 2$, is decreasing function, and its maximum value is 2. Therefore
\[
f\left(u\right)\leq \sum_{\left|k\right|=2}\left(2^m u\right)^{\left|k\right|-1}=\frac{2^mu}{1-2^mu}.
\]
From the inequality
\[
\frac{2^mu}{1-2^mu}\leq 1-\sigma,
\]
we get
\[
u\leq  \frac{1-\sigma}{\left(2-\sigma\right)2^m},
\]
which implies an estimate on $\eta$, which in turn implies the following estimate from below for Bloch's constant
\[
\frac{\sigma\left(1-\sigma\right)}{\left(2-\sigma\right)2^m}\frac{1}{K^m \gamma}\epsilon_{n+1}r_n \lambda_F\left(\beta_n\right).
\]
Using (\ref{Wusmalleigen}), and that by construction
\[\left|\mbox{det}\left(F'\left(\beta_n\right)\right)\right|^{\frac{1}{m}}\epsilon_{n+1}\geq 
\frac{\left|\mbox{det}\left(F'\left(\beta_{\gamma}\right)\right)\right|^{\frac{1}{m}}}{\gamma}\]
this estimate becomes
\begin{equation}
\label{blochwu1}
\frac{\sigma\left(1-\sigma\right)}{\left(2-\sigma\right)2^m}\frac{1}{K^{2m-1} \gamma}
\frac{\left|\mbox{det}\left(F'\left(\beta_{\gamma}\right)\right)\right|^{\frac{1}{m}}}{\gamma}\left|\beta_{\gamma}\right|_{\infty}.
\end{equation}
The inequality above already gives a lower bound for the radius of a schlicht disk
covered by $F$. However, as we did in the one dimensional case, 
we can improve this estimate if we consider the fixed point problem 
\begin{equation}
\label{FPPOSCV}
z=\left(F'\left(0\right)^{-1}\right)w-\sum_{k\geq 2}\frac{F'\left(0\right)^{-1}F^{\left(k\right)}\left(0\right)}{k!}z^{k}.
\end{equation}
In this case, imposing to the left hand side of (\ref{FPPOSCV}) to be a contraction, using Cauchy's estimates, 
and Wu's condition, we obtain that
\[
\sum_{\left|k\right|= 2}^{\infty}
\left|k\right|^m
K \left|\mbox{det}\left(F'\left(\beta_{\gamma}\right)\right)\right|^{\frac{1}{m}}
\left(\dfrac{\eta}{\left|\beta_{\gamma}\right|_{\infty}}\right)^{\left|k\right|-1}\leq 1-\sigma.
\]
Proceeding as before, we obtain the estimate
\[
\frac{\sigma\left(1-\sigma\right)}{\left(2-\sigma\right)2^m}\frac{1}{K \left|\mbox{det}\left(F'\left(\beta_{\gamma}\right)\right)\right|^{\frac{1}{m}}}
\left|\beta_{\gamma}\right|_{\infty}\lambda_{F}\left(0\right)
\]
which gives an estimate from below
\begin{equation}
\label{blochwu2}
\frac{\sigma\left(1-\sigma\right)}{\left(2-\sigma\right)2^m}\frac{1}{K^{m} \left|\mbox{det}\left(F'\left(\beta_{\gamma}\right)\right)\right|^{\frac{1}{m}}}
\left|\beta_{\gamma}\right|_{\infty}
\end{equation}
Now, notice that (\ref{blochwu1}) is better than (\ref{blochwu2}) when
\[
\left|\mbox{det}\left(F'\left(\beta_{\gamma}\right)\right)\right|^{\frac{1}{m}}\geq \gamma K^{\frac{m}{2}-\frac{1}{2}},
\]
whereas (\ref{blochwu2}) is better than (\ref{blochwu1}) when the opposite inequality holds.
From these estimates we conclude that the Bloch constant for Wu $K$-mappings is bounded from below
by
\[
\frac{\sigma\left(1-\sigma\right)}{\left(2-\sigma\right)2^m}\frac{1}{K^{\frac{3}{2}m-\frac{1}{2}}}
\frac{\left|\beta_{\gamma}\right|_{\infty}}{\gamma}
\geq \frac{\sigma\left(1-\sigma\right)}{\left(2-\sigma\right)2^m}\frac{1}{K^{\frac{3}{2}m-\frac{1}{2}}}
\frac{e^{-\frac{1}{\gamma-1}+\frac{1}{2}\frac{1}{\gamma^2-1}-\frac{1}{3}\frac{1}{\gamma^3-1}}}{\gamma}.
\]
Let us write the result whose proof we have given above. 
\begin{theorem}
\label{Blochpolyscv}
Let $F:D\left(0,1\right)\subset \mathbb{C}^m\longrightarrow\mathbb{C}^m$ be a Wu $K$-mapping, normalised so that
$\left|\mbox{det}\left(F'\left(0\right)\right)\right|=1$. Then there
is a constant $C\left(m\right)>0$, which only depends on the dimension, such that
$F$ covers a schlicht disk of radius at least $C\left(m\right)/K^{\frac{3}{2}m-\frac{1}{2}}$.
\end{theorem}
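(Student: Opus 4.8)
The plan is to mirror the one-dimensional argument of Section \ref{prooffixedpoint}, replacing scalars by matrices and the modulus of the derivative by the operator norm $\Lambda_F$, while tracking the extra powers of $K$ that Wu's condition (\ref{Wucondition}) forces upon us. First I would fix the base point $\beta_\gamma$ at which $\left|\det(F'(z))\right|$ attains its maximum over the boundary of the polydisk of radius $\left|\beta_\gamma\right|_\infty$, choose $\gamma>1$ by the normalisation $\left|\beta_\gamma\right|_\infty\prod_{j\geq 1}(1+\gamma^{-j})=1$, and build the finite maximal sequence $\beta_n$, $\epsilon_n$ so that consecutive maxima of the determinant grow by exactly $\gamma^m$. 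This geometric bookkeeping guarantees, just as in the scalar case, that for at least one index $n$ one has $\epsilon_{n+1}\left|\det(F'(\beta_n))\right|^{1/m}\geq \left|\det(F'(\beta_\gamma))\right|^{1/m}/\gamma$, which is what ultimately produces a radius independent of the particular $F$.

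Next I would linearise $F$ at $\beta_n$ and recast the covering problem as the fixed point equation for $g_w$, whose solvability on a ball $B_\eta(\beta_n)$ I would establish via the several-variable contraction principle: a holomorphic $G$ with $\left\|G'\right\|\leq 1-\sigma$ on a convex set is a contraction, by the same Cauchy--Riemann and integration argument used for Lemma \ref{contractionlemma}. The contraction requirement reduces, after applying Cauchy's estimates on the inscribed ball $B_{\epsilon_{n+1}r_n}(\beta_n)\subset D(0,r_{n+1})$ together with the entrywise bound $\left|a_{ij}\right|\leq \left\|A\right\|$, to inequality (\ref{contractionconditionscvm}). The decisive input here is Wu's condition combined with the eigenvalue lower bound (\ref{Wusmalleigen}): pairing $\max \Lambda_F \leq K\left|\det F'(\beta_{n+1})\right|^{1/m}$ with $\lambda_F(\beta_n)\geq K^{-(m-1)}\left|\det F'(\beta_n)\right|^{1/m}$ collapses the product $\left\|F'(\beta_n)^{-1}\right\|\cdot \max\Lambda_F$ to the clean quantity $K^m\gamma$.

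With that in hand I would substitute $\eta = u\,\epsilon_{n+1}\beta_n/(K^m\gamma)$, reducing the contraction requirement to a one-variable inequality in $u$; the one genuinely new analytic point is controlling the sum $\sum_{\left|k\right|\geq 2}\left|k\right|^m u^{\left|k\right|-1}$ arising from the multi-index Cauchy estimate. I would dominate it using that $g(x)=x^{1/(x-1)}$ is decreasing with maximum $2$ on $x\geq 2$, so that $\left|k\right|^{m/(\left|k\right|-1)}\leq 2^m$ and the sum is majorised by the geometric series $2^m u/(1-2^m u)$. Imposing $2^m u/(1-2^m u)\leq 1-\sigma$ yields $u\leq (1-\sigma)/((2-\sigma)2^m)$, hence the schlicht-radius estimate (\ref{blochwu1}). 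Running the identical argument on the fixed point problem (\ref{FPPOSCV}) centred at the origin gives the complementary estimate (\ref{blochwu2}), and taking whichever is larger according to whether $\left|\det F'(\beta_\gamma)\right|^{1/m}$ exceeds $\gamma K^{m/2-1/2}$ balances the two regimes at the exponent $K^{-(3m/2-1/2)}$.

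Finally I would extract the dimension-only constant: the factor $\left|\beta_\gamma\right|_\infty/\gamma$ is bounded below by $\gamma^{-1}\exp\bigl(-\tfrac{1}{\gamma-1}+\tfrac{1}{2(\gamma^2-1)}-\tfrac{1}{3(\gamma^3-1)}\bigr)$ exactly as in the one-dimensional Taylor computation, so optimising $\tfrac{\sigma(1-\sigma)}{(2-\sigma)2^m}\,\gamma^{-1}\exp(\cdots)$ over $\sigma\in(0,1)$ and $\gamma>1$ produces a positive number depending only on $m$, which we christen $C(m)$. The main obstacle I anticipate is not any single estimate but the multi-index Cauchy bookkeeping itself: justifying the polynomial prefactor $\left|k\right|^m$ coming from the number of derivatives of total order $\left|k\right|$ and the component count, and confirming that the ball of radius $\epsilon_{n+1}r_n$ about $\beta_n$ genuinely sits inside the polydisk $D(0,r_{n+1})$, since an error there would corrupt precisely the power of $K$ that is the whole content of the theorem.
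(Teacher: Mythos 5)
Your proposal reproduces the paper's own argument essentially step for step: the same sequence $\beta_n$, $\epsilon_n$ governed by $\gamma^m$-growth of the determinant, the same contraction condition (\ref{contractionconditionscvm}) collapsed to $K^m\gamma$ via Wu's condition and (\ref{Wusmalleigen}), the same substitution $\eta=u\,\epsilon_{n+1}\beta_n/(K^m\gamma)$ with the $x^{1/(x-1)}\leq 2$ bound giving $u\leq(1-\sigma)/((2-\sigma)2^m)$, and the same balancing of (\ref{blochwu1}) against (\ref{blochwu2}) to reach the exponent $K^{-(3m/2-1/2)}$. The details you flag as potential obstacles (the $\left|k\right|^m$ prefactor and the inclusion $B_{\epsilon_{n+1}r_n}\left(\beta_n\right)\subset D\left(0,r_{n+1}\right)$) do check out, so the plan is sound and matches the paper.
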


To obtain an estimate for Bloch's constant for maps defined in the unit ball, just notice that 
$D\left(0,1/\sqrt{m}\right)\subset B_{1}\left(0\right)$; in this way, we obtain a similar estimate
for these maps as the one given in Theorem \ref{Blochpolyscv}, we just have to multiply the estimate by
$m^{-\frac{1}{2}}$.
Notice that the 
best estimates obtained before the paper  \cite{Chen2}, where an estimate from below $\sim\dfrac{1}{K^{m-1}}$
for Bloch's constant of Wu $K$-mappings is shown, gave an
estimate $\sim \dfrac{1}{K^{2m}}$ for the asymptotic behavior of Bloch's constant (see \cite{Chen2} and \cite{Harris}).

\bibliographystyle{amsplain}

\end{document}